\newtheorem{thm}{Theorem}[section]
\newtheorem{lem}[thm]{Lemma}
\newtheorem{assum}[thm]{Assumption}
\theoremstyle{definition}
\theoremstyle{remark}
\newtheorem{rem}{Remark}[section]
\newtheorem{defn}{Definition}
\numberwithin{equation}{section}
\begin{document}

\title[Schr\"odinger equation with singular position dependent mass]{Schr\"odinger equation with singular position dependent mass}

\author[M. Ruzhansky]{Michael Ruzhansky}
\address{
  Michael Ruzhansky:
  \endgraf
  Department of Mathematics: Analysis, Logic and Discrete Mathematics
  \endgraf
  Ghent University, Krijgslaan 281, Building S8, B 9000 Ghent
  \endgraf
  Belgium
  \endgraf
  and
  \endgraf
  School of Mathematical Sciences
  \endgraf
  Queen Mary University of London
  \endgraf
  United Kingdom
  \endgraf
  {\it E-mail address} {\rm michael.ruzhansky@ugent.be}
}

\author[M. Sebih]{Mohammed Elamine Sebih}
\address{
  Mohammed Elamine Sebih:
  \endgraf
  Laboratory of Geomatics, Ecology and Environment (LGEO2E)
  \endgraf
  University Mustapha Stambouli of Mascara, 29000 Mascara
  \endgraf
  Algeria
  \endgraf
  {\it E-mail address} {\rm sebihmed@gmail.com, ma.sebih@univ-mascara.dz}
}

\author[N. Tokmagambetov ]{Niyaz Tokmagambetov }
\address{
  Niyaz Tokmagambetov:
  \endgraf 
  Centre de Recerca Matem\'atica
  \endgraf
  Edifici C, Campus Bellaterra, 08193 Bellaterra (Barcelona), Spain
  \endgraf
  and
  \endgraf   
  Institute of Mathematics and Mathematical Modeling
  \endgraf
  125 Pushkin str., 050010 Almaty, Kazakhstan
  \endgraf  
  {\it E-mail address:} {\rm tokmagambetov@crm.cat; tokmagambetov@math.kz}
  }

\thanks{This research was partly funded by the Committee of Science of the Ministry of Science and Higher Education of the Republic of Kazakhstan (Grant No. AP14872042). The authors are supported by the FWO Odysseus 1 grant G.0H94.18N: Analysis and Partial Differential Equations and by the Methusalem programme of the Ghent University Special Research Fund (BOF) (Grant number 01M01021). Michael Ruzhansky is also supported by EPSRC grant
EP/R003025/2.}

\keywords{Schr\"odinger equation, Cauchy problem, weak solution, position dependent effective mass, singular mass, regularisation.}
\subjclass[2010]{35L81, 35L05, 	35D30, 35A35.}

\begin{abstract}
We consider the Schr\"odinger equation with singular position dependent effective mass and prove that it is very weakly well posed. A uniqueness result is proved in an appropriate sense, moreover, we prove the consistency with the classical theory. In particular, this allows one to consider $\delta$-like or more singular masses.
\end{abstract}

\maketitle


\section{Introduction}
In the present paper we consider the Cauchy problem
\begin{equation}
    \left\lbrace
    \begin{array}{l}
    iu_{t}(t,x) + \sum_{j=1}^{d}\partial_{x_{j}}\left(g(x)\partial_{x_{j}}u(t,x)\right)=0, \,\,\,(t,x)\in\left[0,T\right]\times \mathbb{R}^{d},\\
    u(0,x)=u_{0}(x), \,\,\, x\in\mathbb{R}^{d}, \label{Equation intro}
    \end{array}
    \right.
\end{equation}
where the coefficient $g$ is a positive real valued function or distribution, and $d\geq 1$. We also assume that $g$ is irregular, that is discontinuous or even having strong singularities, distributional or non-distributional.

This kind of problems are investigated in the literature also as an abstract mathematical problem, see for instance \cite{Ban03,BIZ16,BP06,Ich87,Kaj98,Sal05}, where the coefficient $g$ refers to a non Euclidean metric and the Laplace-Beltrami operator
\begin{equation*}
    \Delta_{g} = \sum_{j=1}^{d}\partial_{x_{j}}\left(g(x)\partial_{x_{j}}\right),
\end{equation*}
is considered instead of the Laplace operator. In these works a certain regularity conditions are required on the coefficient. The problem is also investigated as a physical model where
\begin{equation*}
    g(x)=\frac{1}{m(x)},
\end{equation*}
and $m$ represents the mass when studying quantum systems with position dependent effective mass (PDEM). The Schr\"odinger equation with PDEM was initially proposed in \cite{Roos83,RM85} as a model governing the motion of electrons in graded mixed semiconductors where the need of a varying mass is natural. It was later used to describe many physical systems, such as materials of nonuniform chemical composition, supper-lattice band structures and abrupt heterojunctions. For more general overview about the PDEM formalism and its application we refer to \cite{BSP20,Don11,JER11,MJS03,Nab20,PMGA17,SE16,Youn89} and the references therein. In the present work, we are interested in both (mathematical and physical model) and we allow the coefficient $g$ to be irregular, even to have strong singularities. We note here, that in the case of, e.g. mixed graded semiconductors or abrupt heterojunctions where the mass depends on the position and can change suddenly, it is natural to assume the mass to be singular, and this motivates our intention to consider singular masses.

When studying problems such as (\ref{Equation intro}), many questions arise naturally:

\begin{itemize}
    \item To study the well-posedness which needs to give a meaningful notion of solution, since in the case of distributional data where the solutions are expected to have the same regularity as the data and thus nonlinear operations as the multiplication are not allowed in view of the celebrated result of Schwartz \cite{Sch54} about the impossibility of multiplication of distributions, and the classical theory fails. There are several ways to overcome this problem. One way is to use the framework of very weak solutions introduced recently in \cite{GR15} for the analysis of second order hyperbolic equations with irregular coefficients and used later in a series of papers to show a wide applicability, where we can refer to \cite{RT17a,RT17b,MRT19,ART19,Gar20,ARST21a,ARST21b,CRT21,CRT22a,CRT22b}.
    In contrast with the framework of the Colombeau algebras, see for instance \cite{Col84,Col85,Ober92}, where the consistency with classical solutions maybe lost in the case of non-smooth functions, the concept of very weak solutions which depends heavily on the equation under consideration is consistent with the classical theory. Also, the advantage in using the concept of very weak solutions, is that we are allowed to consider coefficients with strong singularities, while in the weak or distributional setting, we are limited to a certain stronger regularity.
    \item Another question that arises naturally when studying problems of the kind of (\ref{Equation intro}) is to study either analytically or numerically the influence of the singularities of the coefficients on the ``solution''. In \cite{GHO13,GO16}, the authors considered a wave equation with coefficients having jumps in the propagation speed and studied the propagation of singularities in the Colombeau setting. In the case of the very weak solutions context, we refer to \cite{SW20}, where the authors developed diagonalisation techniques to understand the influence of the singularities of the coefficients on the very weak solutions. From an experimental point of view, we refer to \cite{ART19,ARST21a,ARST21b,MRT19} where the problem of propagation of singularities was treated numerically.
\end{itemize}

In the present paper we consider the Cauchy problem (\ref{Equation intro}) and study its very weak well-posedness. Moreover, we prove the uniqueness of the very weak solution in an appropriate sense and the consistency with classical solutions.

Our purpose here is to contribute to the study of the well posedness, when the classical theory fails, of the Cauchy problem (\ref{Equation intro}) in the case of irregular coefficients by using the framework of very weak solutions and at the same time to show the wide applicability of the concept. Another aim is to show that the concept of very weak solutions is useful and easy to use in applications. 

The paper is organized as follows. In Section 2, after some preliminaries, we prove a fundamental lemma which is key to proving existence and uniqueness of a very weak solution and consistency with classical solutions. In Section 3, we introduce a notion of very weak solution to our considered model (\ref{Equation intro}) and prove that it is very weak well-posed. In Section 4, we prove that the very weak solution is consistent with the classical solutions when the latter exists.

\section{Preliminaries}
To start with, let us define some notions and notations that we use throughout this paper. Firstly, the notation $f\lesssim g$ means that there exists a positive constant $C$ such that $f \leq Cg$. In addition, we introduce the Sobolev space $W^{1,\infty}(\mathbb{R}^d)$ defined by
\begin{equation*}
    W^{1,\infty}(\mathbb{R}^d):=\left\{ f \text{\,is measurable:}\, \Vert f\Vert_{W^{1,\infty}}:= \Vert f\Vert_{L^{\infty}} + \Vert \nabla f\Vert_{L^{\infty}} < +\infty \right\}.
\end{equation*}
We will also denote by $\Vert \cdot\Vert_{H^2}$ the following norm
\begin{equation*}
        \Vert u(t,\cdot)\Vert_{H^2} := \Vert u(t,\cdot)\Vert_{L^2} + \Vert\sum_{j=1}^{d} \partial_{x_j} u(t,\cdot)\Vert_{L^2} + \Vert \Delta u(t,\cdot)\Vert_{L^2}.
    \end{equation*}

As mentioned above, we want to study the Cauchy problem
\begin{equation}
    \left\lbrace
    \begin{array}{l}
    iu_{t}(t,x) + \sum_{j=1}^{d}\partial_{x_{j}}\left(g(x)\partial_{x_{j}}u(t,x)\right)=0, \,\,\,(t,x)\in\left[0,T\right]\times \mathbb{R}^{d},\\
    u(0,x)=u_{0}(x), \,\,\, x\in\mathbb{R}^{d}. \label{Equation}
    \end{array}
    \right.
\end{equation}
In the case when the coefficient and the Cauchy data in (\ref{Equation}) are regular functions, we easily prove the following lemma which is the key to proving the well-posedness of our problem.

\begin{lem} \label{lem1}
Let $u_{0}\in L^2(\mathbb{R}^{d})$ and assume that $g\in L^{\infty}(\mathbb{R}^d)$ satisfies $\inf_{x\in \mathbb{R}^d}g(x)>0$. Then, the energy conservation
\begin{equation}
    \Vert u(t,\cdot)\Vert_{L^2} = \Vert u_{0}\Vert_{L^2}~, \label{Energy estimate 1}
\end{equation}
holds for all $t\in [0,T]$, for the unique solution $u\in C(\left[0,T\right];L^2(\mathbb{R}^d))$ to the Cauchy problem (\ref{Equation}).
Moreover, if $g\in W^{1,\infty}(\mathbb{R}^d)$ and $u_0 \in H^{2}(\mathbb{R}^d)$, then, the solution $u\in C(\left[0,T\right];H^2(\mathbb{R}^d))$ satisfies the estimate
\begin{equation}
    \Vert u(t,\cdot)\Vert_{H^2} \lesssim \left(1 + \Vert g\Vert_{W^{1,\infty}}\right) \Vert u_0\Vert_{H^2}~, \label{Energy estimate 2}
\end{equation}
for all $t\in [0,T]$.
\end{lem}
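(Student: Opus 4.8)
The plan is to derive both estimates from energy-type identities obtained by pairing the equation with suitable test functions and integrating by parts, exploiting that the spatial operator $L u := \sum_{j} \partial_{x_j}(g \partial_{x_j} u)$ is formally self-adjoint and negative. For the $L^2$-conservation \eqref{Energy estimate 1}, I would first note that under the hypotheses $g \in L^\infty$ with $\inf g > 0$, the operator $-L$ is a nonnegative self-adjoint operator on $L^2(\mathbb{R}^d)$ (e.g.\ defined via the closed quadratic form $\int g |\nabla u|^2\,\d x$), so $e^{itL}$ is a strongly continuous unitary group by Stone's theorem, giving existence, uniqueness in $C([0,T];L^2)$, and the norm identity directly. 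Alternatively, and more in keeping with the elementary ``easily prove'' framing, I would differentiate $\tfrac{\d}{\d t}\|u(t,\cdot)\|_{L^2}^2 = 2\Re\langle u_t, u\rangle = 2\Re\langle i L u, u\rangle$ and integrate by parts to write $\langle Lu, u\rangle = -\int g |\nabla u|^2\,\d x$, which is real, so $2\Re(i \cdot (\text{real})) = 0$ and the $L^2$ norm is constant; the justification of the integration by parts and of differentiating under the integral sign is routine for solutions in the natural energy class.

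For \eqref{Energy estimate 2}, the strategy is to control each of the three pieces $\|u\|_{L^2}$, $\|\sum_j \partial_{x_j} u\|_{L^2}$, $\|\Delta u\|_{L^2}$ appearing in the $H^2$-norm. The first is already handled by \eqref{Energy estimate 1}. For the first-order term, I would use the energy identity coming from pairing the equation with $L u$ (or equivalently differentiating the ``form energy'' $E_1(t) := \int g(x)|\nabla u(t,x)|^2\,\d x$): since $g$ is $t$-independent, $\tfrac{\d}{\d t} E_1(t) = 2\Re \int g \nabla u_t \cdot \overline{\nabla u}\,\d x = -2\Re\langle u_t, Lu\rangle = -2\Re\langle iLu, Lu\rangle = 0$, so $E_1$ is conserved; then $\inf g > 0$ gives $\|\nabla u(t,\cdot)\|_{L^2}^2 \lesssim E_1(t) = E_1(0) \lesssim \|g\|_{L^\infty}\|\nabla u_0\|_{L^2}^2$, and $\|\sum_j \partial_{x_j} u\|_{L^2} \le \sqrt{d}\,\|\nabla u\|_{L^2}$. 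For the second-order term I would note $\|Lu(t,\cdot)\|_{L^2} = \|u_t(t,\cdot)\|_{L^2}$ is conserved (apply \eqref{Energy estimate 1} to $u_t$, which solves the same equation) and equals $\|Lu_0\|_{L^2}$; then I must recover $\|\Delta u\|_{L^2}$ from $\|Lu\|_{L^2}$ and lower-order terms. Writing $Lu = g\,\Delta u + \nabla g \cdot \nabla u$ gives $\|g\,\Delta u\|_{L^2} \le \|Lu\|_{L^2} + \|\nabla g\|_{L^\infty}\|\nabla u\|_{L^2}$, and dividing by $\inf g$ yields $\|\Delta u(t,\cdot)\|_{L^2} \lesssim \|Lu_0\|_{L^2} + \|g\|_{W^{1,\infty}}\|\nabla u_0\|_{L^2}$; finally $\|Lu_0\|_{L^2} \le \|g\|_{L^\infty}\|\Delta u_0\|_{L^2} + \|\nabla g\|_{L^\infty}\|\nabla u_0\|_{L^2} \lesssim \|g\|_{W^{1,\infty}}\|u_0\|_{H^2}$. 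Collecting the three bounds produces $\|u(t,\cdot)\|_{H^2} \lesssim (1 + \|g\|_{W^{1,\infty}})\|u_0\|_{H^2}$ as claimed, where I absorb constants depending only on $d$ and on $\inf g$ into the implicit constant (implicitly these are allowed since the hypotheses fix $\inf g > 0$).

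The existence and regularity assertions, i.e.\ that $u \in C([0,T];H^2)$ when $g \in W^{1,\infty}$ and $u_0 \in H^2$, I would justify by the semigroup framework: $iL$ generates a unitary group on $L^2$, and the same a priori bound just derived shows $H^2$ (or rather the graph norm of $L$, which is equivalent to the $H^2$ norm precisely because $g \in W^{1,\infty}$ with $\inf g > 0$) is preserved, so the flow restricts to a strongly continuous group on that domain; strong continuity in the $H^2$ topology follows from strong continuity in $L^2$ together with uniform boundedness and a density argument.

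The main obstacle I anticipate is the passage from $\|Lu\|_{L^2}$ back to $\|\Delta u\|_{L^2}$: this elliptic-regularity step is exactly where the $W^{1,\infty}$ hypothesis on $g$ (as opposed to merely $L^\infty$) is needed, and one must be a little careful that the constant relating the two norms depends on $g$ only through $\|g\|_{W^{1,\infty}}$ and $\inf g$, since the quantitative dependence on $\|g\|_{W^{1,\infty}}$ in \eqref{Energy estimate 2} is the whole point of the lemma — it is what will later let one track the moderate/negligible behaviour of regularised coefficients. Everything else (the two conservation identities, the triangle inequalities, the interpolation-free combination of terms) is routine once the integrations by parts are justified on the natural energy class.
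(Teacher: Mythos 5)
Your proposal is correct and follows essentially the same route as the paper: $L^2$-conservation from pairing with $u$, conservation of the weighted Dirichlet energy $\int g|\nabla u|^2$ for the first-order term, conservation of $\Vert u_t\Vert_{L^2}=\Vert Lu\Vert_{L^2}$ via the time-differentiated equation, and the elliptic step $g\Delta u = -iu_t - \nabla g\cdot\nabla u$ combined with $\inf g>0$ for $\Vert\Delta u\Vert_{L^2}$. The only cosmetic difference is that you bound $\Vert u(t,\cdot)\Vert_{L^2}$ directly from the conservation law, whereas the paper detours through the fundamental theorem of calculus and the bound on $\Vert u_t\Vert_{L^2}$; both give the claimed estimate.
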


\begin{proof}
We multiply the equation in (\ref{Equation}) by $-i$, to get
\begin{equation}
    u_{t}(t,x) - i\sum_{j=1}^{d}\partial_{x_{j}}\left(g(x)\partial_{x_{j}}u(t,x)\right)=0. \label{Equation lem}
\end{equation}
Multiplying the obtained equation by $u$, integrating over $\mathbb{R}^d$ and taking the real part, we obtain
\begin{equation*}
    Re \left(\langle u_{t}(t,\cdot),u(t,\cdot)\rangle_{L^2} -i \langle \sum_{j=1}^{d}\partial_{x_{j}}\left(g(\cdot)\partial_{x_{j}}u(t,\cdot)\right),u(t,\cdot)\rangle_{L^2} \right)=0,
\end{equation*}
where we easily see that
\begin{equation*}
    Re \langle u_{t}(t,\cdot),u(t,\cdot)\rangle_{L^2} = \frac{1}{2}\partial_{t} \Vert u(t,\cdot)\Vert_{L^2}^{2},
\end{equation*}
and that
\begin{equation*}
    Re \left(i \langle \sum_{j=1}^{d}\partial_{x_{j}}\left(g(\cdot)\partial_{x_{j}}u(t,\cdot)\right),u(t,\cdot)\rangle_{L^2} \right)=0.
\end{equation*}
Thus, $\partial_{t} \Vert u(t,\cdot)\Vert_{L^2}^{2}=0$ and our first statement follows. Let us now assume that $g\in W^{1,\infty}(\mathbb{R}^d)$ and $u_0 \in H^{2}(\mathbb{R}^d)$ and prove our second statement. By the fundamental theorem of calculus we have that
\begin{equation}
    u(t,x) = u_0(x) + \int_{0}^{t} u_{t}(s,x) ds. \label{Fund. th. calculus}
\end{equation}
Taking the $L^2$ norm in (\ref{Fund. th. calculus}) we get the estimate
\begin{equation}
    \Vert u(t,\cdot)\Vert_{L^2} \lesssim \Vert u_0\Vert_{L^2} + \int_{0}^{T} \Vert u_t(s,\cdot) \Vert_{L^2} ds. \label{Estimate u}
\end{equation}
We know that if $u$ solves the Cauchy problem
\begin{equation}
    \left\lbrace
    \begin{array}{l}
    iu_{t}(t,x) + \sum_{j=1}^{d}\partial_{x_{j}}\left(g(x)\partial_{x_{j}}u(t,x)\right)=0, \,\,\,(t,x)\in\left[0,T\right]\times \mathbb{R}^{d},\\
    u(0,x)=u_{0}(x), \,\,\, x\in\mathbb{R}^{d},
    \end{array}
    \right.
\end{equation}
then $u_t$ solves
\begin{equation}
    \left\lbrace
    \begin{array}{l}
   i\partial_t u_{t}(t,x) + \sum_{j=1}^{d}\partial_{x_{j}}\left(g(x)\partial_{x_{j}}u_t(t,x)\right)=0, \,\,\,(t,x)\in\left[0,T\right]\times \mathbb{R}^{d},\\
    u_t(0,x)=i\sum_{j=1}^{d}\partial_{x_{j}}\left(g(x)\partial_{x_{j}} u_0(x)\right), \,\,\, x\in\mathbb{R}^{d}. \label{Equation u_t}
    \end{array}
    \right.
\end{equation}
Using the conservation law (\ref{Energy estimate 1}) for the solution $u_t$ to the Cauchy problem (\ref{Equation u_t}), we obtain
\begin{align}
    \Vert u_t(t,\cdot)\Vert_{L^2} & \lesssim \sum_{j=1}^{d}\Vert \partial_{x_{j}}\left(g(\cdot)\partial_{x_{j}} u_0(\cdot)\right)\Vert_{L^2} \label{Estimate u_t} \\
    & \lesssim \sum_{j=1}^{d}\Vert \partial_{x_{j}} g(\cdot)\partial_{x_{j}} u_0(\cdot)\Vert_{L^2} + \sum_{j=1}^{d}\Vert g(\cdot)\partial_{x_{j}}^{2} u_0(\cdot)\Vert_{L^2}\nonumber \\ 
    & \lesssim \Vert \nabla g\Vert_{L^{\infty}}\Vert u_0\Vert_{H^1} + \Vert g\Vert_{L^{\infty}}\Vert u_0\Vert_{H^2}\nonumber \\
    & \lesssim \Vert g\Vert_{W^{1,\infty}}\Vert u_0\Vert_{H^2}. \nonumber
\end{align}
In the last inequality, we used the fact that for all $j=1,\dots,d$, the terms $\Vert \partial_{x_{j}} g(\cdot)\partial_{x_{j}} u_0(\cdot)\Vert_{L^2}$ and $\Vert g(\cdot)\partial_{x_{j}}^{2} u_0(\cdot)\Vert_{L^2}$ can be estimated by $\Vert \partial_{x_{j}} g(\cdot)\Vert_{L^{\infty}}\Vert\partial_{x_{j}} u_0(\cdot)\Vert_{L^2}$ and $\Vert g(\cdot)\Vert_{L^{\infty}}\Vert\partial_{x_{j}}^{2} u_0(\cdot)\Vert_{L^2}$ respectively.
From (\ref{Estimate u}) we obtain the following estimate for $u$
\begin{equation}
    \Vert u(t,\cdot)\Vert_{L^2} \lesssim \left(1 + \Vert g\Vert_{W^{1,\infty}}\right) \Vert u_0\Vert_{H^2}~. \label{Estimate1 u}
\end{equation}
Now, if we multiply the equation in (\ref{Equation}) by $u_t$ and once again we integrate over $\mathbb{R}^d$ and we take the real part, we get
\begin{equation}
    Re \left(\langle iu_{t}(t,\cdot),u_{t}(t,\cdot)\rangle_{L^2} + \langle \sum_{j=1}^{d}\partial_{x_{j}}\left(g(\cdot)\partial_{x_{j}}u(t,\cdot)\right),u_{t}(t,\cdot)\rangle_{L^2} \right)=0. \label{Formula 1}
\end{equation}
Noting that
\begin{equation*}
    Re \langle iu_{t}(t,\cdot),u_{t}(t,\cdot)\rangle_{L^2}=0,
\end{equation*}
and that
\begin{equation*}
    Re \langle \sum_{j=1}^{d}\partial_{x_{j}}\left(g(\cdot)\partial_{x_{j}}u(t,\cdot)\right),u_{t}(t,\cdot)\rangle_{L^2} = -\frac{1}{2}\partial_{t}\sum_{j=1}^{d}\Vert g^{\frac{1}{2}}\partial_{x_{j}}u(t,\cdot)\Vert_{L^2}^{2},
\end{equation*}
it follows from (\ref{Formula 1}), that the quantity $\sum_{j=1}^{d}\Vert g^{\frac{1}{2}}\partial_{x_{j}}u(t,\cdot)\Vert_{L^2}^2$ is conserved in time. That is
\begin{equation*}
    \sum_{j=1}^{d}\Vert g^{\frac{1}{2}}\partial_{x_{j}}u(t,\cdot)\Vert_{L^2}^2 = \sum_{j=1}^{d}\Vert g^{\frac{1}{2}}\partial_{x_{j}}u_{0}\Vert_{L^2}^2.
\end{equation*}
By the remark that for all $j=1,\dots,d$, the terms $\Vert g^{\frac{1}{2}}\partial_{x_i}u_0\Vert_{L^2}^2$ in the right hand side of the last equality can be estimated by
\begin{equation*}
\Vert g^{\frac{1}{2}}\partial_{x_j}u_0\Vert_{L^2}^2 \leq \Vert g\Vert_{L^{\infty}}\Vert u_0\Vert_{H^1}^2,
\end{equation*}
we get for all $j=1,\dots,d$,
\begin{equation*}
    \Vert g^{\frac{1}{2}}\partial_{x_{j}}u(t,\cdot)\Vert_{L^2} \lesssim \Vert g\Vert_{L^{\infty}}^{\frac{1}{2}}\Vert u_0\Vert_{H^1}.
\end{equation*}
Now, using the assumption that $g$ is bounded from below, that is,
\begin{equation*}
    \inf\limits_{x\in\mathbb R^d}g(x) = c_0 > 0,
\end{equation*}
we get
\begin{equation}
    \Vert \partial_{x_{j}}u(t,\cdot)\Vert_{L^2} \lesssim \Vert g\Vert_{L^{\infty}}^{\frac{1}{2}}\Vert u_0\Vert_{H^1} \leq \left(1 + \Vert g\Vert_{W^{1,\infty}}\right) \Vert u_0\Vert_{H^2}, \label{Estimate du_xj}
\end{equation}
for all $j=1,\dots,d$.
Let us now prove the estimate for $\Delta u$. Taking the $L^2$ norm in the equality
\begin{equation*}
    \sum_{j=1}^{d}g(x)\partial_{x_{j}}^{2}u(t,x) = -iu_{t}(t,x) - \sum_{j=1}^{d}\partial_{x_{j}}g(x)\partial_{x_{j}}u(t,x),
\end{equation*}
obtained from the equation in (\ref{Equation}) and using the estimates (\ref{Estimate u_t}) and (\ref{Estimate du_xj}) and the estimate
\begin{equation*}
    c_0^{\frac{1}{2}}\Vert \Delta u(t,\cdot)\Vert_{L^2} \leq \Vert g(\cdot)\Delta u(t,\cdot)\Vert_{L^2} = \Vert \sum_{j=1}^{d}g(\cdot)\partial_{x_{j}}^{2}u(t,\cdot)\Vert_{L^2},
\end{equation*}
resulting from the assumption that $g$ is positive, we arrive at
\begin{equation}
    \Vert \Delta u(t,\cdot)\Vert_{L^2} \lesssim \left(1 + \Vert g\Vert_{W^{1,\infty}}\right) \Vert u_0\Vert_{H^2}. \label{Estimate Du}
\end{equation}
The second statement of our lemma follows by summing the so far proved estimates (\ref{Estimate1 u}), (\ref{Estimate du_xj}) and (\ref{Estimate Du}). This concludes the proof.
\end{proof}

\section{Very weak well-posedness}
In what follows we consider that the coefficient $g$ and the Cauchy data $u_0$ are singular and we want to prove that a unique very weak solution exists for the Cauchy problem 
\begin{equation}
    \left\lbrace
    \begin{array}{l}
    iu_{t}(t,x) + \sum_{j=1}^{d}\partial_{x_{j}}\left(g(x)\partial_{x_{j}}u(t,x)\right)=0, \,\,\,(t,x)\in\left[0,T\right]\times \mathbb{R}^{d},\\
    u(0,x)=u_{0}(x), \,\,\, x\in\mathbb{R}^{d}, \label{Equation sing}
    \end{array}
    \right.
\end{equation}
Following the idea from \cite{GR15}, we start by regularising $g$ and $u_0$ by convolution with a suitable mollifier $\psi$, which generate families of smooth functions $(g_{\varepsilon})_{\varepsilon}$ and $(u_{0,\varepsilon})_{\varepsilon}$, that is
\begin{equation}
    g_{\varepsilon}(x) = g\ast \psi_{\varepsilon}(x)
\end{equation}
and
\begin{equation}
    u_{0,\varepsilon}(x) = u_0\ast \psi_{\varepsilon}(x),
\end{equation}
where
\begin{equation}
    \psi_{\varepsilon}(x) = \varepsilon^{-1}\psi(x/\varepsilon),\,\,\,\varepsilon\in\left(0,1\right].
\end{equation}
The function $\psi$ is a Friedrichs-mollifier, i.e. $\psi\in C_{0}^{\infty}(\mathbb{R}^{d})$, $\psi\geq 0$ and $\int\psi =1$. Let us now consider nets of smooth functions and introduce the notion of moderateness.

\begin{defn}[Moderateness] \label{defn:Moderatness}
\leavevmode
\begin{itemize}
    \item[(i)]  A net of functions $(f_{\varepsilon})_{\varepsilon}$, is said to be $W^{1,\infty}$-moderate, if there exist $N\in\mathbb{N}_{0}$ such that
\begin{equation*}
    \Vert f_{\varepsilon}\Vert_{W^{1,\infty}} \lesssim \varepsilon^{-N}.
\end{equation*}
    \item[(ii)] A net of functions $(g_{\varepsilon})_{\varepsilon}$, is said to be $H^2$-moderate, if there exist $N\in\mathbb{N}_{0}$ such that
\begin{equation*}
    \Vert g_{\varepsilon}\Vert_{H^2} \lesssim \varepsilon^{-N}.
\end{equation*}
    \item[(iii)] A net of functions $(u_{\varepsilon})_{\varepsilon}$ from $C([0,T]; H^{2}(\mathbb R^d))$ is said to be $C$-moderate, if there exist $N\in\mathbb{N}_{0}$ such that
\begin{equation*}
    \sup_{t\in[0,T]}\Vert u_{\varepsilon}(t,\cdot)\Vert_{H^2} \lesssim \varepsilon^{-N}.
\end{equation*}
\end{itemize}
\end{defn}

\begin{rem}
We note that by regularising a distribution $V\in \mathcal{E}'(\mathbb{R}^{d})$ by convolution with a mollifier $\psi_{\varepsilon}$ as defined above we get a net of moderate functions. Indeed, by the structure theorems for distributions (see, e.g. \cite{FJ98}), we know that every compactly supported distribution can be represented by a
finite sum of (distributional) derivatives of continuous functions. Precisely, for $V\in \mathcal{E}'(\mathbb{R}^{d})$ we can find $n\in \mathbb{N}$ and functions $f_{\alpha}\in C(\mathbb{R}^{d})$ such that, $V=\sum_{\vert \alpha\vert \leq n}\partial^{\alpha}f_{\alpha}$. The convolution of $V$ with a mollifier gives
\begin{equation}
    V\ast\psi_{\varepsilon}=\sum_{\vert \alpha\vert \leq n}\partial^{\alpha}f_{\alpha}\ast\psi_{\varepsilon}=\sum_{\vert \alpha\vert \leq n}f_{\alpha}\ast\partial^{\alpha}\psi_{\varepsilon}=\sum_{\vert \alpha\vert \leq n}\varepsilon^{-\vert\alpha\vert}f_{\alpha}\ast\left(\varepsilon^{-1}\partial^{\alpha}\psi(x/\varepsilon)\right).
\end{equation}
Using an appropriate norm, we see that the net $(V_{\varepsilon})_{\varepsilon}=(V\ast\psi_{\varepsilon})_{\varepsilon}$ is moderate.
\end{rem}

Let us now give our main assumptions. We note that for the sake of generality we will take assumptions on the regularisations of the coefficient and the Cauchy data instead of taking them on the functions themselves.

\begin{assum}
In addition to the assumption that the coefficient $g$ is positive (in the sense that its regularisations $g_{\varepsilon}$ are positive and $\inf_{\varepsilon\in (0,1]}\inf_{x\in \mathbb{R}^d}g_{\varepsilon}(x)>0$), we assume that
\begin{itemize}
    \item[\textit{\textbf{(A1)}}] $g$ is $W^{1,\infty}$-moderate.  
    \item [\textit{\textbf{(A2)}}]$u_0$ is $H^2$-moderate.  
\end{itemize}
\end{assum}

\begin{rem}
We mention here that our assumptions are in agreement with the case when the coefficient $g$ is a distribution, for instance, when it has delta functions. This covers the physical case when, for example, the mass is positive and in some points is very close to zero. In that case the coefficient $g$ goes to infinity and could be approximated by delta functions or even stronger singularities.
\end{rem}

We are now ready to introduce a notion of very weak solution to the Cauchy problem (\ref{Equation sing}).

\begin{defn}[Very weak solution] \label{defn:very weak sol}
The net $(u_{\varepsilon})_{\varepsilon}\in C([0,T]; H^{2}(\mathbb R^d))$ is said to be a very weak solution to the Cauchy problem (\ref{Equation sing}), if there exist a $W^{1,\infty}$-moderate regularisation of the coefficient $g$ and a $H^2$-moderate regularisation of $u_0$, such that the net $(u_{\varepsilon})_{\varepsilon}$ solves the regularized problem
\begin{equation}
    \left\lbrace
    \begin{array}{l}
    i\partial_{t}u_{\varepsilon}(t,x) + \sum_{j=1}^{d}\partial_{x_{j}}\left(g_{\varepsilon}(x)\partial_{x_{j}}u_{\varepsilon}(t,x)\right)=0, \,\,\,(t,x)\in\left[0,T\right]\times \mathbb{R}^{d},\\
    u_{\varepsilon}(0,x)=u_{0,\varepsilon}(x), \,\,\, x\in\mathbb{R}^{d}, \label{Equation regularized}
    \end{array}
    \right.
\end{equation}
for all $\varepsilon\in\left(0,1\right]$, and is $C$-moderate.
\end{defn}

The uniqueness of a very weak solution to the Cauchy problem (\ref{Equation sing}) will be understood in the sense of the following definition.

\begin{defn}[Uniqueness] \label{defn uniq}
We say that the Cauchy problem (\ref{Equation sing}) has a unique very weak solution, if for all families of regularisations $(g_{\varepsilon})_{\varepsilon}$, $(\Tilde{g}_{\varepsilon})_{\varepsilon}$ and $(u_{0,\varepsilon})_{\varepsilon}$, $(\Tilde{u}_{0,\varepsilon})_{\varepsilon}$ of the coefficient $g$ and the Cauchy data $u_0$, satisfying
\begin{equation}
    \Vert g_{\varepsilon}-\Tilde{g}_{\varepsilon}\Vert_{W^{1,\infty}}\leq C_{k}\varepsilon^{k} \text{\,\,for all\,\,} k>0,
\end{equation}
and
\begin{equation}
    \Vert u_{0,\varepsilon}-\Tilde{u}_{0,\varepsilon}\Vert_{L^{2}}\leq C_{m}\varepsilon^{m} \text{\,\,for all\,\,} m>0,
\end{equation}
we have
\begin{equation*}
    \Vert u_{\varepsilon}(t,\cdot)-\Tilde{u}_{\varepsilon}(t,\cdot)\Vert_{L^{2}} \leq C_{N}\varepsilon^{N}
\end{equation*}
for all $N>0$,  
where $(u_{\varepsilon})_{\varepsilon}$ and $(\Tilde{u}_{\varepsilon})_{\varepsilon}$ are the nets of solutions to the related regularized Cauchy problems.
\end{defn}

\begin{thm}[Existence and uniqueness]
Let the coefficient $g$ be positive and assume \textbf{(A1)}, \textbf{(A2)}. Then, the Cauchy problem (\ref{Equation sing}) has a unique very weak solution.
\end{thm}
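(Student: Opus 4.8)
The plan is to read the very weak solution straight off Lemma \ref{lem1}, applied to the regularised problem (\ref{Equation regularized}) for each fixed $\varepsilon$, and then to get uniqueness from an $L^2$ energy estimate for the \emph{inhomogeneous} Schr\"odinger equation solved by the difference of two solution nets. The only ingredient that goes beyond Lemma \ref{lem1} is that inhomogeneous estimate; everything else is bookkeeping with the moderateness exponents.

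\textbf{Existence.} Using \textbf{(A1)}, \textbf{(A2)} and the remark above on regularisation by a Friedrichs mollifier, I would fix regularisations $g_{\varepsilon}=g\ast\psi_{\varepsilon}$ and $u_{0,\varepsilon}=u_0\ast\psi_{\varepsilon}$ that are $W^{1,\infty}$-moderate, resp.\ $H^2$-moderate; then each $g_{\varepsilon}$ is smooth, positive and bounded below by $c_0:=\inf_{\varepsilon\in(0,1]}\inf_x g_{\varepsilon}(x)>0$, and $u_{0,\varepsilon}\in H^2(\mathbb{R}^d)$. For every fixed $\varepsilon\in(0,1]$ the hypotheses of Lemma \ref{lem1} are met, so (\ref{Equation regularized}) has a unique solution $u_{\varepsilon}\in C([0,T];H^2(\mathbb{R}^d))$ with
\begin{equation*}
\sup_{t\in[0,T]}\Vert u_{\varepsilon}(t,\cdot)\Vert_{H^2}\lesssim\left(1+\Vert g_{\varepsilon}\Vert_{W^{1,\infty}}\right)\Vert u_{0,\varepsilon}\Vert_{H^2}.
\end{equation*}
Inserting the moderate bounds $\Vert g_{\varepsilon}\Vert_{W^{1,\infty}}\lesssim\varepsilon^{-N_1}$ and $\Vert u_{0,\varepsilon}\Vert_{H^2}\lesssim\varepsilon^{-N_2}$ gives $\sup_{t}\Vert u_{\varepsilon}(t,\cdot)\Vert_{H^2}\lesssim\varepsilon^{-(N_1+N_2)}$, so $(u_{\varepsilon})_{\varepsilon}$ is $C$-moderate, hence a very weak solution in the sense of Definition \ref{defn:very weak sol}.

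\textbf{Uniqueness.} Take families of regularisations as in Definition \ref{defn uniq} and let $u_{\varepsilon},\tilde u_{\varepsilon}\in C([0,T];H^2(\mathbb{R}^d))$ be the associated solutions from Lemma \ref{lem1}; by the existence part $(u_{\varepsilon})_{\varepsilon}$ is $C$-moderate, say $\sup_{t}\Vert u_{\varepsilon}(t,\cdot)\Vert_{H^2}\lesssim\varepsilon^{-N_0}$. Put $w_{\varepsilon}:=u_{\varepsilon}-\tilde u_{\varepsilon}$. Subtracting the two regularised equations and writing $g_{\varepsilon}\partial_{x_j}u_{\varepsilon}-\tilde g_{\varepsilon}\partial_{x_j}\tilde u_{\varepsilon}=\tilde g_{\varepsilon}\partial_{x_j}w_{\varepsilon}+(g_{\varepsilon}-\tilde g_{\varepsilon})\partial_{x_j}u_{\varepsilon}$, one finds that $w_{\varepsilon}$ solves the inhomogeneous problem
\begin{equation*}
i\partial_t w_{\varepsilon}+\sum_{j=1}^{d}\partial_{x_j}\left(\tilde g_{\varepsilon}\,\partial_{x_j}w_{\varepsilon}\right)=F_{\varepsilon},\qquad w_{\varepsilon}(0,\cdot)=u_{0,\varepsilon}-\tilde u_{0,\varepsilon},
\end{equation*}
with source $F_{\varepsilon}:=-\sum_{j=1}^{d}\partial_{x_j}\left((g_{\varepsilon}-\tilde g_{\varepsilon})\,\partial_{x_j}u_{\varepsilon}\right)$. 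Running the $L^2$ energy computation from the proof of Lemma \ref{lem1} (multiply by $-i\overline{w_{\varepsilon}}$, integrate over $\mathbb{R}^d$, take the real part; the contribution of $\sum_j\partial_{x_j}(\tilde g_{\varepsilon}\partial_{x_j}w_{\varepsilon})$ is purely imaginary since $\tilde g_{\varepsilon}$ is real, hence drops out) gives $\tfrac12\partial_t\Vert w_{\varepsilon}(t,\cdot)\Vert_{L^2}^2\le\Vert F_{\varepsilon}(t,\cdot)\Vert_{L^2}\Vert w_{\varepsilon}(t,\cdot)\Vert_{L^2}$, hence after integration in time
\begin{equation*}
\Vert w_{\varepsilon}(t,\cdot)\Vert_{L^2}\le\Vert u_{0,\varepsilon}-\tilde u_{0,\varepsilon}\Vert_{L^2}+\int_0^T\Vert F_{\varepsilon}(s,\cdot)\Vert_{L^2}\,ds.
\end{equation*}
Expanding the derivative in $F_{\varepsilon}$ and using, as in Lemma \ref{lem1}, that $\Vert\partial_{x_j}u_{\varepsilon}\Vert_{L^2}$ and $\Vert\partial^2_{x_j}u_{\varepsilon}\Vert_{L^2}$ are controlled by $\Vert u_{\varepsilon}\Vert_{H^2}$, we get $\Vert F_{\varepsilon}(s,\cdot)\Vert_{L^2}\lesssim\Vert g_{\varepsilon}-\tilde g_{\varepsilon}\Vert_{W^{1,\infty}}\Vert u_{\varepsilon}(s,\cdot)\Vert_{H^2}\lesssim\Vert g_{\varepsilon}-\tilde g_{\varepsilon}\Vert_{W^{1,\infty}}\,\varepsilon^{-N_0}$. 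Combining this with the hypotheses $\Vert u_{0,\varepsilon}-\tilde u_{0,\varepsilon}\Vert_{L^2}\le C_m\varepsilon^m$ and $\Vert g_{\varepsilon}-\tilde g_{\varepsilon}\Vert_{W^{1,\infty}}\le C_k\varepsilon^k$ (both for all $m,k>0$) yields $\sup_{t\in[0,T]}\Vert w_{\varepsilon}(t,\cdot)\Vert_{L^2}\lesssim C_m\varepsilon^m+T\,C_k\,\varepsilon^{k-N_0}$. Given any $N>0$, taking $m=N$ and $k=N+N_0$ makes the right-hand side $\lesssim\varepsilon^{N}$; since $N$ is arbitrary, $\Vert u_{\varepsilon}(t,\cdot)-\tilde u_{\varepsilon}(t,\cdot)\Vert_{L^2}\le C_N\varepsilon^N$ for all $N>0$, which is the required uniqueness.

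I expect the only genuinely delicate point to be the energy estimate for the inhomogeneous equation satisfied by $w_{\varepsilon}$, and, within it, to choose the splitting $g_{\varepsilon}\partial_{x_j}u_{\varepsilon}-\tilde g_{\varepsilon}\partial_{x_j}\tilde u_{\varepsilon}=\tilde g_{\varepsilon}\partial_{x_j}w_{\varepsilon}+(g_{\varepsilon}-\tilde g_{\varepsilon})\partial_{x_j}u_{\varepsilon}$ so that the source term carries the solution net for which a $C$-moderate $H^2$-bound is available. The remaining issues are routine: checking that the mollified coefficient and data indeed yield moderate nets (covered by \textbf{(A1)}, \textbf{(A2)} and the structure theorem for compactly supported distributions), and propagating the moderateness exponents through the two displayed inequalities above.
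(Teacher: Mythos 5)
Your proposal is correct and follows essentially the same route as the paper: existence and $C$-moderateness are read off Lemma \ref{lem1} applied to the regularised problems, and uniqueness comes from exactly the same splitting $g_{\varepsilon}\partial_{x_j}u_{\varepsilon}-\tilde g_{\varepsilon}\partial_{x_j}\tilde u_{\varepsilon}=\tilde g_{\varepsilon}\partial_{x_j}w_{\varepsilon}+(g_{\varepsilon}-\tilde g_{\varepsilon})\partial_{x_j}u_{\varepsilon}$, so that the source term carries the $C$-moderate net while the coefficient difference is negligible. The only (harmless) variation is that you derive the bound $\Vert w_{\varepsilon}(t,\cdot)\Vert_{L^2}\le\Vert w_{\varepsilon}(0,\cdot)\Vert_{L^2}+\int_0^T\Vert F_{\varepsilon}(s,\cdot)\Vert_{L^2}\,ds$ by a direct energy/Gr\"onwall computation on the inhomogeneous equation, whereas the paper obtains the identical inequality via Duhamel's principle together with the $L^2$ conservation law of Lemma \ref{lem1}; the two are interchangeable here, and your exponent bookkeeping (choosing $m=N$, $k=N+N_0$) matches the paper's conclusion.
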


\begin{proof}
For given moderate coefficient $g$ and data $u_0$, to prove that a very weak solution to the Cauchy problem (\ref{Equation sing}) exists, we need to prove that the net $(u_{\varepsilon})_{\varepsilon}$, solution to the family of regularized Cauchy problems
\begin{equation}
    \left\lbrace
    \begin{array}{l}
    i\partial_{t}u_{\varepsilon}(t,x) + \sum_{j=1}^{d}\partial_{x_{j}}\left(g_{\varepsilon}(x)\partial_{x_{j}}u_{\varepsilon}(t,x)\right)=0, \,\,\,(t,x)\in\left[0,T\right]\times \mathbb{R}^{d},\\
    u_{\varepsilon}(0,x)=u_{0,\varepsilon}(x), \,\,\, x\in\mathbb{R}^{d},
    \end{array}
    \right.
\end{equation}
is $C$-moderate. Using the assumptions \textit{\textbf{(A1)}}, \textit{\textbf{(A2)}} we know that there exist $N_1,N_2\in\mathbb{N}_0$ such that
\begin{equation*}
    \Vert g_{\varepsilon}\Vert_{W^{1,\infty}} \lesssim \varepsilon^{-N_1},
\end{equation*}
and 
\begin{equation*}
    \Vert u_{0,\varepsilon}\Vert_{H^2} \lesssim \varepsilon^{-N_2}.
\end{equation*}
It follows from the energy estimate (\ref{Energy estimate 2}) that for all $t\in[0,T]$
\begin{equation*}
    \Vert u_{\varepsilon}(t,\cdot)\Vert_{H^2} \lesssim \varepsilon^{-2N_1-N_2},
\end{equation*}
which proves the existence. Let us now prove, in the sense of Definition \ref{defn uniq}, that the very weak solution is unique. Assume $(u_{\varepsilon})_{\varepsilon}$ and $(\Tilde{u}_{\varepsilon})_{\varepsilon}$ to be two solutions related to regularisations $(g_{\varepsilon}, u_{0,\varepsilon})_{\varepsilon}$ and $(\Tilde{g}_{\varepsilon}, \Tilde{u}_{0,\varepsilon})_{\varepsilon}$ satisfying
\begin{equation*}
    \Vert g_{\varepsilon}-\Tilde{g}_{\varepsilon}\Vert_{W^{1,\infty}}\leq C_{k}\varepsilon^{k} \text{\,\,for all\,\,} k>0,
\end{equation*}
and
\begin{equation*}
    \Vert u_{0,\varepsilon}-\Tilde{u}_{0,\varepsilon}\Vert_{L^{2}}\leq C_{m}\varepsilon^{m} \text{\,\,for all\,\,} m>0.
\end{equation*}
Let us denote by $U_{\varepsilon}(t,x):=u_{\varepsilon}(t,x)-\Tilde{u}_{\varepsilon}(t,x)$ and consider the auxiliary Cauchy problems
\begin{equation}
    \left\lbrace
    \begin{array}{l}
    i\partial_{t}V_{\varepsilon}(t,x) + \sum_{j=1}^{d}\partial_{x_{j}}\left(\Tilde{g}_{\varepsilon}(x)\partial_{x_{j}}V_{\varepsilon}(t,x)\right) = 0, \,\,\,(t,x)\in\left[0,T\right]\times \mathbb{R}^{d},\\
    V_{\varepsilon}(0,x)=(u_{0,\varepsilon}-\Tilde{u}_{0,\varepsilon})(x), \,\,\, x\in\mathbb{R}^{d},
    \end{array}
    \right.
\end{equation}
\begin{equation}
    \left\lbrace
    \begin{array}{l}
    i\partial_{t}W_{\varepsilon}(t,x;s) + \sum_{j=1}^{d}\partial_{x_{j}}\left(\Tilde{g}_{\varepsilon}(x)\partial_{x_{j}}W_{\varepsilon}(t,x;s)\right) = 0, \,\,\,(t,x)\in\left[0,T\right]\times \mathbb{R}^{d},\\
    W_{\varepsilon}(0,x;s)=f_{\varepsilon}(s,x), \,\,\, x\in\mathbb{R}^{d},
    \end{array}
    \right.
\end{equation}
where
\begin{equation}
    f_{\varepsilon}(t,x) = \sum_{j=1}^{d}\partial_{x_{j}}\left[\left(g_{\varepsilon}(x)-\Tilde{g}_{\varepsilon}(x)\right)\partial_{x_{j}}u_{\varepsilon}(t,x)\right]. \label{Formula for f_eps}
\end{equation}
Using Duhamel's principle (see, e.g. \cite{ER18}), $U_{\varepsilon}(t,x)$ can be represented by
\begin{equation}
    U_{\varepsilon}(t,x)=V_{\varepsilon}(t,x) + \int_{0}^{t}W_{\varepsilon}(x,t-s;s)ds. \label{Representation U_eps} 
\end{equation}
Now, we take the $L^2$ norm in both sides in (\ref{Representation U_eps}) and we use the energy conservation law (\ref{Energy estimate 1}) to estimate $V_{\varepsilon}$ and $W_{\varepsilon}$, to get
\begin{align}
    \Vert U_{\varepsilon}(\cdot,t)\Vert_{L^2} & \leq \Vert V_{\varepsilon}(\cdot,t)\Vert_{L^2} + \int_{0}^{T}\Vert W_{\varepsilon}(\cdot,t-s;s)\Vert_{L^2} ds \nonumber\\
    & \lesssim \Vert u_{0,\varepsilon}-\Tilde{u}_{0,\varepsilon}\Vert_{L^2} + \int_{0}^{T}\Vert f_{\varepsilon}(s,\cdot)\Vert_{L^2} ds. \label{Estimate U_eps}
\end{align}
To estimate $\Vert f_{\varepsilon}(s,\cdot)\Vert_{L^2}$ we use the product rule for derivatives in (\ref{Formula for f_eps}) and the fact that the terms $\Vert\partial_{x_{j}}\left(g_{\varepsilon}-\Tilde{g}_{\varepsilon}\right)\partial_{x_j}u_{\varepsilon}\Vert_{L^2}$ and $\Vert\left(g_{\varepsilon}-\Tilde{g}_{\varepsilon}\right)\partial_{x_j}^{2}u_{\varepsilon}\Vert_{L^2}$ can be estimated by $\Vert\partial_{x_{j}} g_{\varepsilon}-\partial_{x_{j}}\Tilde{g}_{\varepsilon}\Vert_{L^{\infty}} \Vert\partial_{x_j}u_{\varepsilon}\Vert_{L^2}$ and $\Vert g_{\varepsilon}-\Tilde{g}_{\varepsilon}\Vert_{L^{\infty}} \Vert\partial_{x_j}^{2}u_{\varepsilon}\Vert_{L^2}$, respectively, for all $j=1,...d$.
So we arrive at
\begin{align}
    \Vert f_{\varepsilon}(s,\cdot)\Vert_{L^2} & = \Vert\sum_{j=1}^{d}\partial_{x_{j}}\left[\left(g_{\varepsilon}(\cdot)-\Tilde{g}_{\varepsilon}(\cdot)\right)\partial_{x_{j}}u_{\varepsilon}(s,\cdot)\right]\Vert_{L^2}\nonumber\\
    & \leq \sum_{j=1}^{d} \Vert\partial_{x_{j}} g_{\varepsilon}-\partial_{x_{j}}\Tilde{g}_{\varepsilon}\Vert_{L^{\infty}} \Vert\partial_{x_j}u_{\varepsilon}\Vert_{L^2} + \Vert g_{\varepsilon}-\Tilde{g}_{\varepsilon}\Vert_{L^{\infty}}\Vert\sum_{j=1}^{d} \partial_{x_j}^{2}u_{\varepsilon}\Vert_{L^2}\nonumber\\
    & \leq \Vert g_{\varepsilon}-\Tilde{g}_{\varepsilon}\Vert_{W^{1,\infty}}\sum_{j=1}^{d} \Vert\partial_{x_j}u_{\varepsilon}\Vert_{L^2} + \Vert g_{\varepsilon}-\Tilde{g}_{\varepsilon}\Vert_{W^{1,\infty}}\Vert\sum_{j=1}^{d} \partial_{x_j}^{2}u_{\varepsilon}\Vert_{L^2}. \label{Estimate f_eps}
\end{align}
On the one hand, the net $(u_{\varepsilon})_{\varepsilon}$ is $C$-moderate as a very weak solution of the Cauchy problem (\ref{Equation sing}), that is, there exists $N\in \mathbb{N}_0$ such that
\begin{equation*}
    \sum_{j=1}^{d} \Vert\partial_{x_j}u_{\varepsilon}\Vert_{L^2} \lesssim \varepsilon^{-N},
\end{equation*}
and
\begin{equation*}
    \Vert\sum_{j=1}^{d} \partial_{x_j}^{2}u_{\varepsilon}\Vert_{L^2} \lesssim \varepsilon^{-N}.
\end{equation*}
From the other hand, we have by assumption that
\begin{equation*}
    \Vert g_{\varepsilon}-\Tilde{g}_{\varepsilon}\Vert_{W^{1,\infty}}\leq C_{k}\varepsilon^{k} \text{\,\,for all\,\,} k>0,
\end{equation*}
and
\begin{equation*}
    \Vert u_{0,\varepsilon}-\Tilde{u}_{0,\varepsilon}\Vert_{L^{2}}\leq C_{m}\varepsilon^{m} \text{\,\,for all\,\,} m>0.
\end{equation*}
It follows from (\ref{Estimate U_eps}) and (\ref{Estimate f_eps}) that
\begin{equation*}
    \Vert u_{\varepsilon}(t,\cdot)-\Tilde{u}_{\varepsilon}(t,\cdot)\Vert_{L^2} \lesssim \varepsilon^{n},
\end{equation*}
for all $n\in \mathbb{N}$. This proves the uniqueness of the very weak solution.
\end{proof}

\section{Consistency with classical solutions}
Our task here is to prove that when the Cauchy problem (\ref{Equation}) is well-posed in the classical sense, that means, when a classical solution $u\in C(\left[0,T\right];H^2(\mathbb{R}^d))$ exists, then every very weak solution $(u_{\varepsilon})_{\varepsilon}$ converges to $u$ in an appropriate norm.

\begin{thm}[Consistency]\label{Thm consistency}
Let us consider the Cauchy problem
\begin{equation}
    \left\lbrace
    \begin{array}{l}
    iu_{t}(t,x) + \sum_{j=1}^{d}\partial_{x_{j}}\left(g(x)\partial_{x_{j}}u(t,x)\right)=0, \,\,\,(t,x)\in\left[0,T\right]\times \mathbb{R}^{d},\\
    u(0,x)=u_{0}(x), \,\,\, x\in\mathbb{R}^{d}, \label{Equation consistency}
    \end{array}
    \right.
\end{equation}
where $g\in W^{1,\infty}(\mathbb{R}^d)$ with $\inf_{x\in\mathbb{R}^d}g(x)>0$ and $u_{0}\in H^2(\mathbb{R}^{d})$.
Let $(u_{\varepsilon})_{\varepsilon}$ be a very weak solution of (\ref{Equation consistency}). Then, for any regularising families $(g_{\varepsilon})_{\varepsilon}=(g\ast\psi_{\varepsilon})_{\varepsilon}$ and $(u_{0,\varepsilon})_{\varepsilon}=(u_{0}\ast\psi_{\varepsilon})_{\varepsilon}$ for any $\psi\in C_{0}^{\infty}$, $\psi\geq 0$, $\int\psi =1$, such that
\begin{equation}\label{approx.condition}
    \Vert g_{\varepsilon} - g\Vert_{W^{1,\infty}} \rightarrow 0,
\end{equation} the net $(u_{\varepsilon})_{\varepsilon}$ converges to the classical solution of the Cauchy problem (\ref{Equation consistency}) in $L^{2}$ as $\varepsilon \rightarrow 0$.
\end{thm}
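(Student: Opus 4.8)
The plan is to compare the very weak solution directly with the classical solution by means of the same Duhamel device used in the uniqueness part of the previous theorem, the essential simplification being that here the classical solution $u$ is honestly bounded in $C([0,T];H^2(\mathbb{R}^d))$, so no moderateness bookkeeping is required. First I would note that, under the stated hypotheses, the second part of Lemma \ref{lem1} produces the classical solution $u\in C([0,T];H^2(\mathbb{R}^d))$ with $M:=\sup_{t\in[0,T]}\Vert u(t,\cdot)\Vert_{H^2}<\infty$; moreover, for each fixed $\varepsilon$ the regularised problem has coefficient $g_{\varepsilon}=g\ast\psi_{\varepsilon}$ with $g_{\varepsilon}\geq\inf_{x}g(x)>0$ (because $\psi\geq0$, $\int\psi=1$) and datum $u_{0,\varepsilon}=u_{0}\ast\psi_{\varepsilon}\in H^{2}$, so again by Lemma \ref{lem1} it has a unique solution $u_{\varepsilon}\in C([0,T];H^{2})$, and the lower bound $c_{0}$ available for $g_{\varepsilon}$ is uniform in $\varepsilon$.

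Next I would set $U_{\varepsilon}:=u_{\varepsilon}-u$, subtract the two equations, and add and subtract $\sum_{j}\partial_{x_{j}}(g_{\varepsilon}\partial_{x_{j}}u)$ to see that $U_{\varepsilon}$ solves the inhomogeneous Cauchy problem
\begin{equation*}
    i\partial_{t}U_{\varepsilon}+\sum_{j=1}^{d}\partial_{x_{j}}\big(g_{\varepsilon}\partial_{x_{j}}U_{\varepsilon}\big)=h_{\varepsilon},\qquad U_{\varepsilon}(0,\cdot)=u_{0,\varepsilon}-u_{0},
\end{equation*}
with source term $h_{\varepsilon}(t,x)=-\sum_{j=1}^{d}\partial_{x_{j}}\big[(g_{\varepsilon}(x)-g(x))\,\partial_{x_{j}}u(t,x)\big]$, which belongs to $C([0,T];L^{2}(\mathbb{R}^d))$ since $g_{\varepsilon}-g\in W^{1,\infty}$ and $u\in C([0,T];H^{2})$. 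Then, exactly as in the uniqueness proof, I would invoke Duhamel's principle to write $U_{\varepsilon}(t,\cdot)=V_{\varepsilon}(t,\cdot)+\int_{0}^{t}W_{\varepsilon}(\cdot,t-s;s)\,ds$, where $V_{\varepsilon}$ solves the homogeneous equation with coefficient $g_{\varepsilon}$ and datum $u_{0,\varepsilon}-u_{0}$, and $W_{\varepsilon}(\cdot,\cdot;s)$ solves the homogeneous equation with coefficient $g_{\varepsilon}$ and datum $-i\,h_{\varepsilon}(s,\cdot)$. Taking the $L^{2}$ norm and applying the conservation law (\ref{Energy estimate 1}) to $V_{\varepsilon}$ and to each $W_{\varepsilon}(\cdot,\cdot;s)$ — legitimate because these coefficients lie in $L^{\infty}$ and are bounded below uniformly in $\varepsilon$ — reduces everything to the single inequality
\begin{equation*}
    \sup_{t\in[0,T]}\Vert U_{\varepsilon}(t,\cdot)\Vert_{L^{2}}\leq\Vert u_{0,\varepsilon}-u_{0}\Vert_{L^{2}}+\int_{0}^{T}\Vert h_{\varepsilon}(s,\cdot)\Vert_{L^{2}}\,ds .
\end{equation*}

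It then remains to show that both terms on the right tend to $0$ as $\varepsilon\to0$. The first does because $u_{0}\ast\psi_{\varepsilon}\to u_{0}$ in $L^{2}$ by the standard approximate-identity property, $u_{0}\in L^{2}$. For the second I would apply the product rule together with the estimates already used in Lemma \ref{lem1}, obtaining for each $s$
\begin{equation*}
    \Vert h_{\varepsilon}(s,\cdot)\Vert_{L^{2}}\lesssim\Vert\nabla(g_{\varepsilon}-g)\Vert_{L^{\infty}}\Vert u(s,\cdot)\Vert_{H^{1}}+\Vert g_{\varepsilon}-g\Vert_{L^{\infty}}\Vert u(s,\cdot)\Vert_{H^{2}}\lesssim\Vert g_{\varepsilon}-g\Vert_{W^{1,\infty}}\,M,
\end{equation*}
so that $\int_{0}^{T}\Vert h_{\varepsilon}(s,\cdot)\Vert_{L^{2}}\,ds\lesssim TM\,\Vert g_{\varepsilon}-g\Vert_{W^{1,\infty}}$, which tends to $0$ precisely by the hypothesis (\ref{approx.condition}). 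Combining the two bounds gives $\sup_{t\in[0,T]}\Vert u_{\varepsilon}(t,\cdot)-u(t,\cdot)\Vert_{L^{2}}\to0$, which is the assertion.

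I do not expect a serious obstacle; the point genuinely worth care is the status of the extra assumption (\ref{approx.condition}). For $g\in W^{1,\infty}$ the mollifications $g\ast\psi_{\varepsilon}$ in general do \emph{not} converge to $g$ in the $W^{1,\infty}$ norm (that convergence would force $g$ and $\nabla g$ to be uniformly continuous), so this hypothesis cannot be dropped and is exactly what makes the source term $h_{\varepsilon}$ negligible; correspondingly, since only the $L^{2}$ energy conservation (\ref{Energy estimate 1}) is used, the convergence obtained is in $L^{2}$ rather than $H^{2}$, which is natural because (\ref{Energy estimate 2}) carries the growing factor $1+\Vert g\Vert_{W^{1,\infty}}$. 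It is also worth stressing that, in contrast with the uniqueness theorem — where $(u_{\varepsilon})_{\varepsilon}$ had to be $C$-moderate because the analogous source involved $u_{\varepsilon}$ itself — here moderateness plays no role: the source involves the fixed classical solution $u$, whose bound $M$ is all that is needed.
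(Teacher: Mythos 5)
Your proposal is correct and follows essentially the same route as the paper: write $u_{\varepsilon}-u$ as the solution of an inhomogeneous problem with source $\sum_{j}\partial_{x_{j}}[(g_{\varepsilon}-g)\partial_{x_{j}}u]$, apply Duhamel's principle together with the $L^{2}$ conservation law (\ref{Energy estimate 1}), and bound the source by $\Vert g_{\varepsilon}-g\Vert_{W^{1,\infty}}\sup_{t}\Vert u(t,\cdot)\Vert_{H^{2}}$. Your added remarks — that the lower bound on $g_{\varepsilon}=g\ast\psi_{\varepsilon}$ is uniform in $\varepsilon$ and that (\ref{approx.condition}) is a genuine extra hypothesis rather than an automatic consequence of mollification for $g\in W^{1,\infty}$ — are accurate and slightly more careful than the paper's exposition, but do not change the argument.
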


\begin{proof}
Let $(u_{\varepsilon})_{\varepsilon}$ be the very weak solution and $u$ the classical one. Let us denote by $V_{\varepsilon}(t,x):=u_{\varepsilon}(t,x)-u(t,x)$. Then $V_{\varepsilon}$ solves the problem
\begin{equation}
    \left\lbrace
    \begin{array}{l}
    i\partial_{t}V_{\varepsilon}(t,x) + \sum_{j=1}^{d}\partial_{x_{j}}\left(g_{\varepsilon}(x)\partial_{x_{j}}V_{\varepsilon}(t,x)\right) = \eta_{\varepsilon}(t,x), \,\,\,(t,x)\in\left[0,T\right]\times \mathbb{R}^{d},\\
    V_{\varepsilon}(0,x)=(u_{0,\varepsilon}-u_{0})(x), \,\,\, x\in\mathbb{R}^{d},
    \end{array}
    \right.
\end{equation}
where
\begin{equation}
    \eta_{\varepsilon}(t,x):= \sum_{j=1}^{d}\partial_{x_{j}}\left[\left(g_{\varepsilon}(x)-g(x)\right)\partial_{x_{j}}u(t,x)\right].
\end{equation}
Repeating the arguments of the previous theorem (uniqueness) and, in particular, using Duhamel's principle, we easily obtain the following estimate for $V_{\varepsilon}$,
\begin{align}
    \Vert V_{\varepsilon}(\cdot,t)\Vert_{L^2} \lesssim \Vert u_{0,\varepsilon}-u_{0}\Vert_{L^2} & +  \sum_{j=1}^{d} \Vert\partial_{x_{j}} g_{\varepsilon}-\partial_{x_{j}}g\Vert_{L^{\infty}} \int_{0}^{T}\Vert\partial_{x_j}u(s,\cdot)\Vert_{L^2} ds\nonumber\\
    & + \Vert g_{\varepsilon}-g\Vert_{L^{\infty}}\int_{0}^{T} \Vert\sum_{j=1}^{d} \partial_{x_j}^{2}u(s,\cdot)\Vert_{L^2} ds. \label{Estimate V_eps}
\end{align}
$u$ is a classical solution, thus, the quantities $\sum_{j=1}^{d} \partial_{x_j}^{2}u(t,x)$ and for all $j=1,...,d$, $\partial_{x_j}u(t,x)$ are bounded in $L^2$. From the other hand, we have that $\Vert g_{\varepsilon} - g\Vert_{W^{1,\infty}} \rightarrow 0$ and $\Vert u_{0,\varepsilon}-u_{0}\Vert_{L^2} \rightarrow 0$ as $\varepsilon\rightarrow 0$. It follows from (\ref{Estimate V_eps}) that $(u_{\varepsilon})_{\varepsilon}$ converges in $L^2$ to the classical solution $u$. This ends the proof of the theorem.
\end{proof}

\end{document}